\newcommand{\Z}{\mathbb{Z}}
\newcommand{\R}{\mathbb{R}}
\newcommand{\C}{\mathbb{C}}
\newcommand{\OreGen}{R[x;\sigma,\delta]}
\newcommand{\DiffPol}{R[x;\identity_R,\delta]}
\DeclareMathOperator{\identity}{id}
\DeclareMathOperator{\Cen}{Cen}
\def\imod#1{\allowbreak\mkern10mu({\operator@font mod}\,\,#1)}
\theoremstyle{plain}
\newtheorem{theorem}{Theorem}[section]
\newtheorem{lemma}[theorem]{Lemma}
\newtheorem{remark}[theorem]{Remark}
\newtheorem{corollary}[theorem]{Corollary}
\newtheorem{definition}[theorem]{Definition}
\title{Centralizers in non-associative rings with a pseudo-degree function }
\author{Johan Richter %
\footnote{Blekinge Institute of Technology, Karlskrona, Sweden	 E-mail: johan.richter@bth.se }
}
\begin{document}

\maketitle

\begin{abstract}
This papers studies centralizers of an element, $a$, in the nucleus of a non-associative algebra with a special type of valuation. We prove that the centralizer of $a$ is a free module of finite rank over the algebra generated by $a$. \\
\noindent \textbf{Keywords:} Commuting elements, Valuations, Algebraic Dependence

\end{abstract}

\section{Introduction}

 Burchnall and Chaundy studied, in a series of papers in the 1920s and 30s \cite{BurchnallChaundy1,BurchnallChaundy2, BurchnallChaundy3}, the properties of commuting pairs of ordinary differential operators. The following theorem is essentially found in their papers. 
\begin{theorem}\label{thm_BC}
 Let $P=\sum_{i=0}^n p_i D^i$ and $Q= \sum_{j=0}^{m} q_j D^j$ be two commuting differential operators, with polynomial coefficients and constant leading coefficients. Then there is a non-zero polynomial $f(s,t)$ in two commuting variables over $\C$ such that 
$f(P,Q) =0$. Note that the fact that $P$ and $Q$ commute 
guarantees that $f(P,Q)$ is well-defined.
\end{theorem}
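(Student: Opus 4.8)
The plan is to carry out the classical Burchnall--Chaundy ``spectral curve'' construction in a purely formal setting. We may assume both operators have positive order, since if $\operatorname{ord}(P)=0$ then $P$ is a scalar $c\in\C$ and $f(s,t)=s-c$ works (and likewise for $Q$); so let $\operatorname{ord}(P)=n\ge 1$ and $\operatorname{ord}(Q)=m\ge 1$, with nonzero constants $p_n,q_m$. Fix a base point $x_0$ and let $P,Q$ act as $\C$-linear endomorphisms of $\C[[x-x_0]]$. For $\lambda\in\C$ set $V_\lambda=\{\,y\in\C[[x-x_0]] : Py=\lambda y\,\}$. Plugging $y=\sum_{\ell\ge0}a_\ell(x-x_0)^\ell$ into $Py=\lambda y$ and using $p_n\neq0$ gives a recursion that expresses $a_{\ell+n}$ as an affine-linear function of $a_0,\dots,a_{\ell+n-1}$ whose coefficients are polynomials in $\lambda$; hence $\dim_\C V_\lambda=n$, with $a_0,\dots,a_{n-1}$ as free parameters, and for the basis $y^\lambda_0,\dots,y^\lambda_{n-1}$ determined by $(y^\lambda_k)^{(j)}(x_0)=\delta_{jk}$ for $0\le j\le n-1$, \emph{every} Taylor coefficient of each $y^\lambda_k$ is a polynomial in $\lambda$.

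Since $P$ and $Q$ commute, $y\in V_\lambda$ forces $P(Qy)=Q(Py)=\lambda Qy$, so $Q(V_\lambda)\subseteq V_\lambda$; let $\Lambda(\lambda)$ be the matrix of $Q|_{V_\lambda}$ in the basis $(y^\lambda_k)$. As $y^\lambda_l$ is the unique member of $V_\lambda$ with prescribed $(n-1)$-jet at $x_0$, the $l$-th coordinate of $Qy^\lambda_k$ is $(Qy^\lambda_k)^{(l)}(x_0)$, which is assembled from the polynomial coefficients of $Q$ and finitely many Taylor coefficients of $y^\lambda_k$ and is therefore a polynomial in $\lambda$. Thus $\Lambda\in M_n(\C[\lambda])$, and
\[
f(s,t):=\det\bigl(tI_n-\Lambda(s)\bigr)\in\C[s,t]
\]
is an honest polynomial; being monic of degree $n\ge1$ in $t$, it is nonzero. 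This is the candidate annihilating polynomial.

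It remains to check $f(P,Q)=0$. On $V_\lambda$ the operator $P$ acts as the scalar $\lambda$ and $Q$ acts as $\Lambda(\lambda)$, and since $Q(V_\lambda)\subseteq V_\lambda$ each monomial $P^iQ^j$ acts as $\lambda^i\Lambda(\lambda)^j$; hence $f(P,Q)$ acts on $V_\lambda$ as the matrix $f(\lambda,\Lambda(\lambda))$, which is $0$ by the Cayley--Hamilton theorem. So the operator $A:=f(P,Q)$, which again has polynomial coefficients, annihilates $V_\lambda$ for every $\lambda\in\C$. If $A\neq0$, of order $d$, pick $x_1$ where the (nonzero polynomial) leading coefficient of $A$ does not vanish and redo the eigenspace construction there: $\{y\in\C[[x-x_1]] : Ay=0\}$ is then $d$-dimensional, yet it contains a nonzero vector from each of $V_{\lambda_0},\dots,V_{\lambda_d}$ for any distinct $\lambda_0,\dots,\lambda_d\in\C$, and these $d+1$ vectors are linearly independent as eigenvectors of $P$ for distinct eigenvalues. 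This contradiction gives $A=f(P,Q)=0$.

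The step demanding the most care is the polynomial dependence of the eigensolutions -- and hence of the matrix $\Lambda$ -- on the spectral parameter $\lambda$; this is precisely what makes $f$ a polynomial rather than merely an entire function of $s$, and it is the one place where the \emph{constancy} of the leading coefficient is really used (it makes the governing recursion have polynomial-in-$\lambda$ coefficients and forces $\dim V_\lambda=n$ uniformly in $\lambda$). Once that is in hand, the remaining ingredients -- Cayley--Hamilton, independence of eigenvectors for distinct eigenvalues, and the dimension count for the kernel of a nonzero operator at a point that is not one of its finitely many singular points -- are all standard.
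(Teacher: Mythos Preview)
Your argument is the classical Burchnall--Chaundy spectral-curve construction and is essentially correct, but it is not the route the paper takes. The paper does not prove Theorem~\ref{thm_BC} directly; it quotes it as motivation and remarks that it is recovered from the Amitsur--Goodearl line (Theorem~\ref{thm_DiffField} and its descendants), which is the line the paper itself generalises: one shows that the centraliser $C_S(P)$ is a free $\C[P]$-module of finite rank, so $1,Q,Q^2,\ldots$ cannot all be $\C[P]$-independent, and any dependence $\sum_i f_i(P)Q^i=0$ furnishes $f$. Your approach is more constructive---it exhibits an explicit $f$, monic of degree $n=\operatorname{ord}P$ in $t$, as the characteristic polynomial of $Q$ on the $P$-eigenspaces---while the centraliser-module approach is less explicit but transplants immediately to general Ore extensions and to the abstract pseudo-degree framework that is the paper's actual subject.

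One step in your write-up needs tightening. You build $f$ (hence $A=f(P,Q)$) from the eigenspaces $V_\lambda\subset\C[[x-x_0]]$ and use Cayley--Hamilton to show that $A$ annihilates every $V_\lambda$ \emph{at $x_0$}; then, to bound $\dim\ker A$, you pass to a point $x_1$ where the leading coefficient of $A$ is nonzero. In your ``purely formal'' setup the spaces $V_\lambda$ live in $\C[[x-x_0]]$, not in $\C[[x-x_1]]$, and you have not argued that $A$ kills the eigenspaces $V_\lambda^{(x_1)}$ constructed at $x_1$ (the Cayley--Hamilton step used the matrix $\Lambda$ computed at $x_0$, not at $x_1$). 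Two easy repairs: either note that, because $p_n$ is a nonzero constant and the $p_i$ are polynomials, every formal solution in $V_\lambda$ is in fact entire and may be re-expanded at $x_1$ (so the proof is not purely formal, but is correct); or, more cleanly, stay at $x_0$ and embed $\C[[x-x_0]]$ in the differential field $\C((x-x_0))$, where the leading coefficient of $A$ becomes a unit and the kernel of any nonzero operator of order $d$ has $\C$-dimension at most $d$. Either way the contradiction with the infinitely many independent eigenvectors goes through.
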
 

The result of Burchnall and Chaundy was rediscovered independently during the 70s by researchers in the area of PDEs. It turns out that several important equations can be equivalently 
formulated as a condition that a pair of differential operators commute. These differential equations are completely integrable as a result, which roughly means that they possess an infinite number of conservation
laws. In fact  Theorem~\ref{thm_BC} was rediscovered by Kricherver \cite{Krichever} as part of his research into integrable systems. 

To state some generalizations of Burchnall's and Chaundy's result  we shall recall a definition. 

\begin{definition}

Let $R$ be a ring, $\sigma$ an endomorphism of $R$ and $\delta$ an additive function, $R \to R$, satisfying 
\begin{equation*}
\delta(ab) = \sigma(a)\delta(b)+\delta(a)b
\end{equation*}
for all $a,b \in R$. (Such $\delta$:s are known as $\sigma$-derivations.) The \emph{Ore extension} $\OreGen$ is the polynomial ring $R[x]$ equipped with a new multiplication such that $xr=\sigma(r)x+\delta(r)$ for all $r \in R$. Every element of $\OreGen$ can be written uniquely as  $\sum_i a_i x^i$ for some $a_i \in R$.

If $\sigma=\identity$ then $\DiffPol$ is called a \emph{differential operator ring}. If $P= \sum_{i=0}^n a_i x^i$, with $a_n \neq 0$, we say that $P$ has \emph{degree} $n$. We say that the zero element has degree $-\infty$. 
\end{definition}

The ring of differential operators studied by Burchnall and Chaundy can be taken to be the Ore extension $T=C^{\infty}(\R,\C)[D;\identity,\delta]$, where $\delta$ is the ordinary derivation.

In a paper by Amitsur \cite{Amitsur} one can find the following theorem.

\begin{theorem}\label{thm_DiffField}
 Let $K$ be a field of characteristic zero with a derivation $\delta$. Let $F$ denote the subfield of constants. (By a \emph{constant} we mean an element that is mapped to zero by the derivation.) Form the differential operator ring $S=K[x; \identity,\delta]$, and let $P$ be an element of $S$ of 
degree $n>0$. Set $F[P]= \{ \sum_{j=0}^m b_j P^j \ | \ b_j \in F \ \}$, the ring of polynomials in $P$ with constant coefficients. 
Then the centralizer of $P$ is a commutative subring of
$S$ and a free $F[P]$-module of rank at most $n$.  
\end{theorem}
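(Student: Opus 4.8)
The plan is to exhibit the centralizer $C$ of $P$ as a finite direct sum of rank-one free $F[P]$-modules, one for each residue class modulo $n$ that actually occurs among the degrees of nonzero elements of $C$, and then to establish commutativity. Several preliminary observations are immediate. Since $\sigma=\identity$, the relation $xc=cx+\delta(c)$ shows that an element of $K$ is central in $S$ exactly when it is a constant; hence $F$ is central in $S$, and since $P$ commutes with itself, $F[P]\subseteq C$ is a commutative subring. Because $\deg P=n>0$, the powers $P^k$ have pairwise distinct degrees $kn$, so they are $F$-linearly independent and $F[P]$ is a polynomial ring in one variable over $F$, in particular a PID. Finally, $K$ being a field, $S$ is a domain satisfying $\deg(UV)=\deg U+\deg V$ and $\mathrm{lc}(UV)=\mathrm{lc}(U)\,\mathrm{lc}(V)$; consequently $C$, as a submodule of the domain $S$, is torsion-free over $F[P]$. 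Thus it is enough to write $C$ as a finite direct sum of rank-one free $F[P]$-modules.

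The crux is a lemma on leading coefficients. If $R\in C$ has degree $d$, then comparing the coefficients of $x^{n+d-1}$ in the identity $PR=RP$ — the contributions of the lower-order terms of $P$ and $R$ cancelling because $K$ is commutative — yields
\[
n\,\mathrm{lc}(P)\,\delta\bigl(\mathrm{lc}(R)\bigr)\;=\;d\,\mathrm{lc}(R)\,\delta\bigl(\mathrm{lc}(P)\bigr),
\]
that is, $\delta(\mathrm{lc}(R))/\mathrm{lc}(R)=(d/n)\cdot\delta(\mathrm{lc}(P))/\mathrm{lc}(P)$. (The case $d=0$ already gives $\delta(R)=0$, so $C\cap K=F$.) In particular, if $R,R'\in C$ have the same degree then $\delta(\mathrm{lc}(R)/\mathrm{lc}(R'))=0$, so $\mathrm{lc}(R)/\mathrm{lc}(R')\in F$. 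Now run a division process: for each $r\in\{0,1,\dots,n-1\}$ that equals $\deg A\bmod n$ for some nonzero $A\in C$, fix $A_r\in C$ of least degree $m_r$ with $m_r\equiv r\pmod n$; for $r=0$ one may take $A_0=1$. Given any $A\in C$ of degree $d\equiv r\pmod n$ we have $d-m_r=kn$ with $k\ge 0$; applying the lemma to $A$ and $P^kA_r$, which both lie in $C$ and have degree $d$, we get $c\in F$ with $\mathrm{lc}(A)=c\,\mathrm{lc}(P^kA_r)$, whence $A-cP^kA_r\in C$ has degree $<d$. Induction on degree then gives $A\in\sum_r F[P]A_r$. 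This sum is direct: in any $F[P]$-linear relation the nonzero terms $f_r(P)A_r$ have degrees $n\deg f_r+m_r$, which are distinct modulo $n$ and hence genuinely distinct, so no cancellation is possible; the same remark shows each map $F[P]\to F[P]A_r$ is injective, so $F[P]A_r$ is free of rank one. Since at most $n$ residues occur, $C$ is a free $F[P]$-module of rank at most $n$.

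It remains to prove that $C$ is commutative, and I expect this to be the real obstacle. By the decomposition just obtained, $C$ is module-finite over the central polynomial ring $F[P]$, hence a two-sided Noetherian PI domain; it therefore embeds in the finite-dimensional division algebra $Q=C\otimes_{F[P]}F(P)$ over the field $L=F(P)$, which in turn lies in the division ring of fractions $\mathcal{D}$ of $S$, and commutativity of $C$ amounts to $Q$ being a field. One natural route is valuation-theoretic: the degree function extends to a $\Z$-valued valuation on $\mathcal{D}$ whose residue division ring is the commutative field $K$, so every finite-dimensional division subalgebra of $\mathcal{D}$ has cyclic value group and commutative residue ring, and an Ostrowski-type fundamental-inequality argument — here $\CHAR K=0$ enters — forces such a subalgebra to be commutative. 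A more hands-on route sharpens the leading-coefficient lemma: the same coefficient computation applied to $A,B\in C$ shows that the coefficient of $x^{\deg A+\deg B-1}$ in $[A,B]$ vanishes, so the commutator drops two degrees rather than one; combining this with the fact that a subring of a PI ring cannot contain a Weyl algebra — which, since $C\cap K=F$, rules out a nonzero commutator of degree $\le 0$ — and propagating the degree estimate downward should finish the argument. In either approach the delicate point is the passage from ``the associated graded ring $\operatorname{gr}C$ is commutative'' to ``$C$ is commutative'': this implication fails for general filtered rings (the Weyl algebra is the standard counterexample), so the precise form of the leading-coefficient relation and the hypothesis $\CHAR K=0$ must be used essentially. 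Granting commutativity, $C$ is at once a commutative subring of $S$ and a free $F[P]$-module of rank at most $n$, as claimed.
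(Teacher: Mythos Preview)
The paper does not prove Theorem~\ref{thm_DiffField}; it quotes the result from Amitsur~\cite{Amitsur} as background. The paper's own contributions, Theorem~\ref{thm_BoundDim} and its refinement Theorem~\ref{thmComCen}, generalise only the free-module assertion and make no commutativity claim. So there is no in-paper proof of the full statement to compare your attempt against.

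Your argument for the free-module part is correct and is precisely Amitsur's: the second-highest-coefficient comparison yields the logarithmic-derivative relation $n\,\mathrm{lc}(P)\,\delta(\mathrm{lc}(R))=d\,\mathrm{lc}(R)\,\delta(\mathrm{lc}(P))$, so leading coefficients of elements of $C$ of a fixed degree form a single $F^\times$-orbit; choosing one $A_r$ of minimal degree in each occurring residue class modulo $n$ and reducing by $F$-multiples of $P^kA_r$ gives a free $F[P]$-basis of cardinality at most $n$. This is exactly the mechanism the paper isolates abstractly as condition $D(1)$ and exploits in the proofs of Theorems~\ref{thm_BoundDim} and~\ref{thmComCen}; your residue-class organisation matches the proof of Theorem~\ref{thmComCen} almost line for line.

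The genuine gap is the commutativity of $C$, which you explicitly do not prove (``Granting commutativity\ldots''). Neither sketch is close to a proof. The valuation-theoretic one is wrong as stated: a division algebra finite over a discretely valued centre can have cyclic value group and commutative residue field yet be non-commutative---any nontrivial cyclic division algebra over $\Q_p$ is an example---so an ``Ostrowski-type'' inequality alone cannot force commutativity; whatever extra input is needed from the embedding into $\mathcal{D}$, you have not identified it. The hands-on sketch correctly obtains $\deg[A,B]\le\deg A+\deg B-2$, but ``propagating the degree estimate downward'' is precisely the hard step, and the PI/Weyl-algebra remarks do not supply it (you would need control over \emph{all} lower coefficients, not just the top two). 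In Amitsur's paper the commutativity is established by a further, more delicate use of the same leading-coefficient identity---applied now between two arbitrary elements of $C$ rather than between an element and $P$---and this is where $\CHAR K=0$ is genuinely used. You would need to reproduce that argument, or an equivalent one, to close the gap.
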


Later authors have found other contexts where Amitsur's method of proof can be made to work. We mention an article by Goodearl and Carlson \cite{GoodearlCarlson}, and one by Goodearl alone \cite{GoodearlPseudo}, that generalize Amitsur's result to a wider class of rings. The proof has also been generalized by Bavula \cite{Bavula}, Mazorchouk \cite{Mazorchuk} and Tang \cite{Xin}, among other authors. As a corollary of these results, one can recover Theorem \ref{thm_BC}.

Among papers inspired by Amitsur's is a paper by Hellstr\"{o}m and Silvestrov \cite{ergodipotent}. Hellstr\"{o}m and Silvestrov study graded algebras satisfying a condition they call $l$-BDHC (short for ``Bounded-Dimension Homogeneous Centralizers''). 

\begin{definition}
Let $K$ be a field, $\ell$ a positive integer and $S$ a $\Z$-graded $K$-algebra. The homogeneous components of the gradation are denoted $S_m$, for $m \in \Z$. Let $\Cen(n,a)$, for $n \in \Z$ and $a \in S$, denote the elements in $S_n$ that commute with $a$. We say that $S$ has $\ell$-BDHC if for all $n \in \Z$, nonzero $m \in \Z$ and nonzero $a \in S_m$, it holds that $\dim_K \Cen(n,a) \leq \ell$.  

\end{definition}

Hellstr\"{o}m and Silvestrov apply the ideas of Amitsur's proof. They need to modify them however, especially to handle the case when $\ell>1$. 

To explain their results further, we introduce some more of their notation. Denote by $\pi_n$ the projection, defined in the obvious way, from $S$ to $S_n$. Hellstr\"{o}m and Silvestrov define a function $\bar{\chi}: A\setminus \{ 0\} \to \Z$ by
\begin{equation*}
\bar{\chi}(a) = \max\{ \, n \, \in \Z \, | \, \pi_n(a) \neq 0 \, \},
\end{equation*}
and set $\bar{\chi}(0) = -\infty$. 
Set further $\bar{\pi}(a) = \pi_{\bar{\chi}(a)}(a)$. 

Now we have introduced enough notation to state the relevant results. The following result is the main part of Lemma 2.4 in their paper. 

\begin{theorem}\label{thm_HS1}
Assume $S$ is a $K$-algebra with $l$-BDHC and that there are no zero divisors in $S$. If $a \in S\setminus S_0$ is such that $\bar{\chi}(a)=m >0$ and $\bar{\pi}(a)$ is not invertible in $S$, then there exists a finite $K[a]$-module basis $\{b_1,\ldots, b_k\}$ for the centralizer of $a$. Furthermore $k \leq ml$. 
\end{theorem}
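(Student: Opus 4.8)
To prove Theorem~\ref{thm_HS1} I would adapt Amitsur's argument to the graded setting in four stages; write $C(a)$ for the centralizer of $a$ in $S$, and assume $S$ is unital so that $1\in C(a)$. First I would record that $\bar{\chi}$ behaves like a degree function: since $S$ has no zero divisors, for nonzero $u,v$ one has $\bar{\pi}(uv)=\bar{\pi}(u)\bar{\pi}(v)\ne 0$, whence $\bar{\chi}(uv)=\bar{\chi}(u)+\bar{\chi}(v)$, while $\bar{\chi}(u+v)\le\max(\bar{\chi}(u),\bar{\chi}(v))$ is immediate. In particular a nonzero polynomial in $a$ over $K$ has $\bar{\chi}$ equal to $m$ times its degree, so it is nonzero; thus $a$ is transcendental over $K$, $K[a]\cong K[t]$, and ``free $K[a]$-module of finite rank'' is meaningful. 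The one genuinely new observation at this stage is that if $b\in C(a)$, then $ab=ba$ forces $\bar{\pi}(a)\bar{\pi}(b)=\bar{\pi}(b)\bar{\pi}(a)$, i.e.\ $\bar{\pi}(b)\in\Cen(\bar{\chi}(b),\bar{\pi}(a))$, a $K$-space of dimension at most $\ell$ by $\ell$-BDHC (here $\bar{\pi}(a)\in S_m$, $m\ne 0$, $\bar{\pi}(a)\ne 0$).

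Next I would introduce, for $n\in\Z$, the leading-term subspaces $L_n:=\{\bar{\pi}(b):b\in C(a),\ \bar{\chi}(b)=n\}\cup\{0\}\subseteq S_n$. Since $C(a)$ is a $K$-subspace, each $L_n$ is a $K$-subspace of $\Cen(n,\bar{\pi}(a))$, so $\dim_K L_n\le\ell$; and sending $b$ to $ab$ shows that left multiplication by $\bar{\pi}(a)$ maps $L_n$ injectively into $L_{n+m}$, so $\dim_K L_n$ is non-decreasing, hence eventually constant, along each residue class modulo $m$. The crucial additional input — and the step I expect to be the main obstacle — is that $\bar{\chi}$ is bounded below on $C(a)$, equivalently that $L_n=0$ for $n$ small in each class; this is precisely where the hypothesis that $\bar{\pi}(a)$ is not invertible enters (for $S=K[t,t^{-1}]$ with $\deg t=1$ and $a=t$ the centralizer is all of $S$ and the conclusion fails). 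I would argue by contradiction: if a nonzero $b\in C(a)$ had $\bar{\chi}(b)=t<0$, choose $j,k\ge 1$ with $km=-jt$; then $w:=\bar{\pi}(a)^k\bar{\pi}(b)^j$ is a nonzero element of degree $0$ commuting with $\bar{\pi}(a)$, hence lies in the finite-dimensional $K$-subalgebra $\Cen(0,\bar{\pi}(a))$, which has no zero divisors and is therefore a division algebra; so $w$ is invertible in $S$, which makes $\bar{\pi}(a)^k$ one-sidedly invertible and hence — no zero divisors again — invertible, forcing $\bar{\pi}(a)$ invertible, a contradiction. This in fact gives $\bar{\chi}(b)\ge 0$ for all nonzero $b\in C(a)$, so $L_n=0$ for $n<0$.

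With the boundedness in hand the rest is bookkeeping. Set $e_n:=\dim_K\bigl(L_n/\bar{\pi}(a)L_{n-m}\bigr)=\dim_K L_n-\dim_K L_{n-m}\ge 0$. Along each residue class $r\in\{0,\dots,m-1\}$ the sum $\sum_{j\ge 0}e_{r+jm}$ telescopes — using $L_{r-m}=0$ and eventual constancy — to $\lim_{j\to\infty}\dim_K L_{r+jm}\le\ell$, so $k:=\sum_n e_n\le m\ell$ and only finitely many $e_n$ are nonzero. For each $n$ I would pick $b_{n,1},\dots,b_{n,e_n}\in C(a)$ with $\bar{\chi}(b_{n,i})=n$ whose leading parts form a $K$-basis of a chosen complement of $\bar{\pi}(a)L_{n-m}$ in $L_n$, and let $b_1,\dots,b_k$ be the resulting list (one may take $b_{0,1}=1$, since $\bar{\pi}(a)L_{-m}=0$).

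Finally I would verify that $\{b_1,\dots,b_k\}$ is a free $K[a]$-basis of $C(a)$. For spanning, I induct on $\bar{\chi}(b)$, which is well founded because it is bounded below: given $b\in C(a)$ with $\bar{\chi}(b)=n$, decompose $\bar{\pi}(b)=\bar{\pi}(a)\bar{\pi}(b')+\sum_i c_i\bar{\pi}(b_{n,i})$ with $b'\in C(a)$, $\bar{\chi}(b')=n-m$ (or $b'=0$), and $c_i\in K$; then $b-ab'-\sum_i c_i b_{n,i}\in C(a)$ has strictly smaller $\bar{\chi}$, and the inductive hypothesis applies both to it and to $b'$. For $K[a]$-independence, suppose $\sum_i f_i(a)b_i=0$ nontrivially and let $N$ be the maximum of $\bar{\chi}(f_i(a)b_i)$ over $f_i\ne 0$; the degree-$N$ homogeneous component of the sum equals $\sum'\operatorname{lc}(f_i)\,\bar{\pi}(a)^{(N-\bar{\chi}(b_i))/m}\bar{\pi}(b_i)=0$, the primed sum running over the $i$ attaining $N$. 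But passing to the associated graded of the filtration $L_N\supseteq\bar{\pi}(a)L_{N-m}\supseteq\bar{\pi}(a)^2L_{N-2m}\supseteq\cdots$ — which terminates at $0$ since $L_n=0$ for $n<0$ — shows that the elements $\bar{\pi}(a)^{J}\bar{\pi}(b_{N-Jm,i})$ form a $K$-basis of $L_N$; hence every leading coefficient $\operatorname{lc}(f_i)$ occurring in the primed sum vanishes, contradicting the choice of $N$. Therefore $C(a)=\bigoplus_{i=1}^k K[a]\,b_i$, free of rank $k\le m\ell$, as claimed.
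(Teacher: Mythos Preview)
Your proof is correct; note, however, that the paper does not itself prove Theorem~\ref{thm_HS1} (it is quoted from Hellstr\"om--Silvestrov), so the natural comparison is with the paper's proof of its own generalization, Theorem~\ref{thm_BoundDim}. The two arguments implement Amitsur's strategy in organizationally different ways. The paper builds the basis greedily---$b_1=1$, and $b_{k+1}$ of minimal $\chi$ outside the $K[a]$-span of $b_1,\dots,b_k$---and then proves by induction on $\max_i(\chi(\phi_i)+\chi(b_i))$ the exact degree formula $\chi\bigl(\sum_i \phi_i b_i\bigr)=\max_i(\chi(\phi_i)+\chi(b_i))$; this single formula simultaneously yields freeness and, combined with condition $D(\ell)$, the bound $k\le m\ell$ on the number of basis elements per residue class. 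You instead pass to the leading-term spaces $L_n\subseteq S_n$, count dimensions via the telescoping quantities $e_n=\dim L_n-\dim L_{n-m}$, choose basis elements lifting complements of $\bar\pi(a)L_{n-m}$ in $L_n$, and then verify spanning and independence by separate inductions. Your filtration/associated-graded viewpoint is a bit more structural and makes the counting transparent; the paper's greedy-plus-degree-formula approach is more self-contained and avoids ever naming $L_n$. There is one substantive point where you go beyond the paper: you correctly isolate and prove that $\bar\chi\ge 0$ on $C(a)\setminus\{0\}$, using that $\Cen(0,\bar\pi(a))$ is a finite-dimensional domain (hence a division algebra) to contradict non-invertibility of $\bar\pi(a)$. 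In the paper's Theorem~\ref{thm_BoundDim} this non-negativity is simply part of the hypothesis (condition $D(\ell)$), so that step is absent there, but it is genuinely needed for Theorem~\ref{thm_HS1} as stated.
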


The reason they refer to it as a lemma is that their main interest is in the following corollary of this result. (Which is proved the same way as Corollary \ref{corAlgDep} in this paper.) 

\begin{theorem}\label{thm_HS2}
Let $K$ be a field and assume the $K$-algebra $S$ has $l$-BDHC and that there are no zero divisors in $S$. If $a \in S\setminus S_0$ and $b\in S$ are such that $ab = ba$, $\bar{\chi}(a)>0$ and $\bar{\pi}(a)$ is not invertible in $S$, then there exists a nonzero polynomial $P$ in two commuting variables with coefficients from $K$ such that $P(a,b)=0$. 
\end{theorem}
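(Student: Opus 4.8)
The plan is to obtain this as an immediate consequence of Theorem~\ref{thm_HS1}. First note that $b$ belongs to the centralizer of $a$ --- call it $C$ --- since $ab = ba$, and that $K[a] \subseteq C$ as well. Because $K$ is central in $S$ and $a$ commutes with itself, $K[a]$ is a commutative subring of $S$, and since $S$ has no zero divisors $K[a]$ is in fact a commutative integral domain. The hypotheses $\bar{\chi}(a) = m > 0$ and $\bar{\pi}(a)$ not invertible are exactly what is needed to invoke Theorem~\ref{thm_HS1}, which tells us that $C$ is a free $K[a]$-module with a finite basis $\{b_1, \dots, b_k\}$, where $k \le ml$.

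I would next record that $a$ is transcendental over $K$, so that evaluation at $a$ identifies the polynomial ring $K[s]$ with $K[a]$. Indeed, since $S$ has no zero divisors, $\bar{\pi}(a^j) = \bar{\pi}(a)^j \ne 0$, hence $\bar{\chi}(a^j) = jm$ for all $j \ge 0$; the powers of $a$ therefore have pairwise distinct $\bar{\chi}$-values, so any nontrivial $K$-linear combination of them is nonzero (its $\bar{\chi}$ equals the largest $\bar{\chi}$ among its nonzero terms).

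Now apply the standard rank argument. The $k+1$ elements $1, b, b^2, \dots, b^k$ all lie in $C$, so writing $b^i = \sum_{j=1}^{k} m_{ij} b_j$ with $m_{ij} \in K[a]$ produces a $(k+1) \times k$ matrix over the domain $K[a]$. Over its field of fractions the $k+1$ rows must be linearly dependent; clearing denominators yields $c_0, \dots, c_k \in K[a]$, not all zero, with $\sum_{i=0}^{k} c_i b^i = 0$. Pulling the $c_i$ back to polynomials in $K[s]$ and setting $P(s,t) = \sum_{i=0}^{k} c_i(s) t^i$, we obtain a nonzero $P \in K[s,t]$ with $P(a,b) = 0$; this expression is unambiguous because $a$ and $b$ commute.

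I do not expect any genuine obstacle here: the whole substance of the statement is already carried by Theorem~\ref{thm_HS1}. The only points that deserve a line of care are the transcendence of $a$ --- without it a nonzero module relation need not translate into a nonzero \emph{polynomial} --- and the elementary commutative-algebra fact that a free module of finite rank $k$ over an integral domain contains no $k+1$ linearly independent elements.
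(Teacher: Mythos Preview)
Your proof is correct and follows essentially the same route the paper indicates: deduce the result from Theorem~\ref{thm_HS1} by observing that the powers of $b$ lie in the centralizer, which is a free $K[a]$-module of finite rank, and then extract a $K[a]$-linear dependence among $1,b,\dots,b^k$. The paper (via the analogous Corollary~\ref{corAlgDep}) is terser, leaving the transcendence of $a$ and the rank argument implicit, whereas you spell these out; the substance is the same.
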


Theorem \ref{thm_HS2} is directly analogous to Theorem \ref{thm_BC}.

The proofs in \cite{ergodipotent}  are based upon certain properties of the function $\bar{\chi}$ they define.  In \cite{Pseudodegree} the author of the present article uses what he calls \emph{pseudo-degree functions} to axiomatize the properties Hellstr\"om and Silvestrov need, and generalize their result. He manages to prove Theorem \ref{thm_BoundDim} from the present article in the associative case, as well as number of corollaries.

It shall be the goal of this paper to show that some of the results of \cite{Pseudodegree} also can be proven in a non-associative setting, for elements in the nucleus of the algebra.

\subsection{Notation and conventions}

By $K$ we will always denote a field.

If $R$ is a ring then $R[x_1,x_2, \ldots x_n]$ denotes the ring of polynomials over $R$ in central indeterminates $x_1,x_2,\ldots,x_n$. 

All rings and algebras are assumed to be unital, but not necessarily associative. We shall work only with algebras over fields. If $S$ is a $K$-algebra, where $K$ is a field, we embed $K$ as a subset of $S$ in the natural way. 

If $S$ is an algebra, by $N_l(S)$ we mean the set of elements in $S$ that associate with everything from the left, that is the set of all $a$ such that $a(bc)=(ab)c$ for all $b,c\in S$. Similarly we define $N_m(S)$ and $N_r(S)$. These subsets are in fact subalgebras of $S$. We define the nucleus, $N(S)$, as the intersection of $N_l(S)$, $N_m(S)$ and $N_r(S)$.  By $C(S)$ we mean the set of element that commute with everything in $S$. The \emph{center}, $Z(S)$, is defined as $C(S)\cap N(S)$. 

Obviously, $K \subseteq Z(S)$. 

If $S$ is a ring and $a$ is an element in $S$, the \emph{centralizer} of $a$, denoted $C_S(a)$, is the set of all elements in $S$ that commute with $a$.

\section{Centralizers in algebras with degree functions}
 We start by defining the properties our pseudo-degree functions will need in the non-associative setting. 
 
\begin{definition}
Let $K$ be a field and let $S$ be a $K$-algebra. A function, $\chi$, from $S$ to $\Z \cup \{-\infty\}$ is called a pseudo-degree function if it satisfies the following conditions: 

\begin{itemize}
\item $\chi(a) = -\infty$ iff $a=0$,
\item $\chi(ab) = \chi(a)+\chi(b)$ for all $a,b \in S$, 
\item $\chi(a+b) \leq \max(\chi(a),\chi(b))$,
\end{itemize}

\end{definition}

This is essentially a special case of the concept of a \emph{valuation}. 

The following Lemma is obvious. 

\begin{lemma}
Let $K$ be a field and let $S$ be a $K$-algebra. If $\chi$ is a pseudo-degree function on $S$ then 
\begin{itemize}
\item 	$\chi(1)=\chi(-1)=0$
\item  if $b \in S^{\times} \cap N_m(S) \cap N_r(S)$ then $\chi(b^{-1}) = -\chi(b)$
\end{itemize}
\end{lemma}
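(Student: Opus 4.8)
Both bullets are immediate consequences of the multiplicativity axiom $\chi(ab)=\chi(a)+\chi(b)$, once combined with the elementary facts $1\cdot 1=1$ and $(-1)(-1)=1$, which hold in every unital ring (associative or not). Since $K\subseteq S$ and $K$ is a field, $S\neq 0$, hence $1\neq 0$; by the first axiom this means $\chi$ takes an integer (not $-\infty$) value on every nonzero element, in particular on $1$, on $-1$, and on any invertible element. So there are no issues about $\chi$ returning $-\infty$ in the manipulations below.

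For the first bullet I would substitute $a=b=1$ into multiplicativity to get $\chi(1)=\chi(1)+\chi(1)$, and cancel the finite number $\chi(1)$ to conclude $\chi(1)=0$. For $-1$ I would first record that $(-1)(-1)=1$, which comes from distributivity alone: $(-1)(-1)+(-1)=(-1)\bigl((-1)+1\bigr)=(-1)\cdot 0=0$. Then $0=\chi(1)=\chi\bigl((-1)(-1)\bigr)=2\chi(-1)$ forces $\chi(-1)=0$.

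For the second bullet the one thing that needs care in the non-associative setting is that the symbol $b^{-1}$ is well defined, and this is precisely what the nucleus hypotheses are there to guarantee. I would argue: if $c$ and $d$ are both two-sided inverses of $b$, then $c=c\cdot 1=c(bd)=(cb)d=1\cdot d=d$, where the middle equality uses $b\in N_m(S)$; hence the inverse of $b$ is unambiguous. (The hypothesis $b\in N_r(S)$ is not actually needed for this particular statement, but costs nothing to carry.) Since $b\neq 0$ and, from $b\cdot 0=0\neq 1$, also $b^{-1}\neq 0$, both $\chi(b)$ and $\chi(b^{-1})$ are integers, and multiplicativity gives $0=\chi(1)=\chi(bb^{-1})=\chi(b)+\chi(b^{-1})$, whence $\chi(b^{-1})=-\chi(b)$.

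I do not expect any real obstacle here — which is exactly why the lemma is stated as obvious. The only subtlety worth flagging is the possible non-uniqueness of inverses in a general non-associative ring, which is what makes it necessary to invoke the middle nucleus before writing $b^{-1}$; everything else is a direct one-line reading of the axioms.
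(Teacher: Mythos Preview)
Your argument is correct and is exactly the obvious verification the paper has in mind; the paper itself gives no proof at all, merely declaring the lemma obvious. Your extra care about uniqueness of $b^{-1}$ via $b\in N_m(S)$ is a reasonable way to justify the notation, and your observation that $N_r(S)$ is not strictly needed here is also fine.
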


	We also need a condition that can replace $l$-BDHC. We formulate it next.

\begin{definition}

Let $K$ be a field, with characteristic not equal to $2$, $S$ a $K$-algebra with a pseudo-degree function, $\chi$, and $\ell$ be a positive integer. A subalgebra, $B \subset A$, is said to satisfy condition $D(\ell)$ if  $\chi(b) \geq 0$ for all non-zero $b \in B$ and if, whenever we have $\ell+1$ elements $b_1, \ldots, b_{l+1} \in B$, all mapped to the same integer by $\chi$, there exist $\alpha_1,\ldots, \alpha_{\ell+1} \in K$, not all zero, such that $\chi\left(\sum_{i=1}^{l+1} \alpha_i b_i\right) < \chi(b_1)$. 

\end{definition}

\begin{remark}
Note that the requirement that $\alpha_1,\ldots,\alpha_{l+1}$ are mapped to the same integer by $\chi$ excludes the possibility that they are equal to $0$.   

\end{remark}

\begin{remark}\label{remScalarDeg}

Suppose that $S$ is a $K$-algebra and $a \in N_l(S)\cap N_m(S)$ is such that $C_S(a)$ satisfies condition $D(\ell)$ for some $\ell$. If $\alpha$ is a non-zero scalar, then $\chi(\alpha^{-1})=-\chi(\alpha)$. Since $\chi(\alpha)$ and $\chi(\alpha^{-1})$ are both non-negative, it follows that $\chi(\alpha)=0$. 
\end{remark}

\begin{lemma}\label{lemAdd}

Let $K$ be a field. Suppose that $S$ is an $K$-algebra and $\chi$ is a pseudo-degree function on $S$ that maps all the non-zero scalars to zero. Then if $a,b \in S$  are such that $\chi(b)<\chi(a)$, the identity 
\begin{equation}
\chi(a+b) =\chi(a)
\end{equation}
holds.
\end{lemma}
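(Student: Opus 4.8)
The plan is to prove this by the standard valuation argument, using the ultrametric-type inequality $\chi(x+y) \leq \max(\chi(x),\chi(y))$ together with the hypothesis that nonzero scalars have degree zero (so that $\chi(-1)=0$ and hence $\chi(-y)=\chi(y)$ for all $y$).

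First I would apply the subadditivity axiom directly to $a+b$, giving $\chi(a+b) \leq \max(\chi(a),\chi(b)) = \chi(a)$, where the last equality uses the assumption $\chi(b) < \chi(a)$. This gives one of the two inequalities needed. The reverse inequality is the slightly more delicate half: I would write $a = (a+b) + (-b)$ and apply subadditivity again to get $\chi(a) \leq \max(\chi(a+b), \chi(-b))$. Since $\chi$ maps nonzero scalars to zero, we have $\chi(-1)=0$, and then $\chi(-b) = \chi((-1)b) = \chi(-1) + \chi(b) = \chi(b)$ by multiplicativity. Therefore $\chi(a) \leq \max(\chi(a+b), \chi(b))$.

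Now I would argue that the maximum on the right cannot be $\chi(b)$: if it were, we would have $\chi(a) \leq \chi(b)$, contradicting the hypothesis $\chi(b) < \chi(a)$. (One should also note that $a \neq 0$ here, since $\chi(a) > \chi(b) \geq -\infty$, so $\chi(a)$ is a genuine integer and the inequality $\chi(a) \leq \chi(b)$ is a real contradiction even when $b = 0$.) Hence $\chi(a) \leq \chi(a+b)$. Combining this with the first inequality $\chi(a+b) \leq \chi(a)$ yields $\chi(a+b) = \chi(a)$, as desired.

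I do not anticipate any serious obstacle: this is the familiar fact that a non-archimedean valuation is ``locally constant'' away from cancellation, and the only point requiring the hypothesis (rather than just the three axioms) is the identity $\chi(-b) = \chi(b)$, which is why the statement explicitly assumes nonzero scalars are sent to zero. Care should be taken only with the degenerate case $b=0$, where the claim reads $\chi(a) = \chi(a)$ and is trivial, and with keeping track of the $-\infty$ conventions so that no arithmetic with $-\infty$ is performed illegitimately.
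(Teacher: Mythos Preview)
Your argument is correct and essentially identical to the paper's: both bound $\chi(a+b)\le\chi(a)$ directly and then write $a=(a+b)+(-b)$ to get the reverse inequality. The only cosmetic difference is that the paper tacitly uses the earlier fact $\chi(-1)=0$ (which already follows from the axioms, since $(-1)^2=1$) rather than invoking the scalar hypothesis to obtain $\chi(-b)=\chi(b)$.
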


\begin{proof}
On the one hand we find $\chi(a+b) \leq \max(\chi(a),\chi(b))=\chi(a)$. On the other hand 
$\chi(a) = \chi(a+b-b) \leq \max(\chi(a+b),\chi(b))$ Since $\chi(b)<\chi(a)$ we must have 
$\chi(a) \leq \chi(a+b)$. 
\end{proof}

We now proceed to prove an analogue of Theorem \ref{thm_HS1}, using just the existence of some pseudo-degree function and the condition $D(\ell)$.

\begin{remark}
If $a\in N_l(S) \cup N_m(S)\cup N_r(S)$ then the algebra generated by $a$ is associative.  
\end{remark}

\begin{theorem}\label{thm_BoundDim}
Let $K$ be a field and let $S$ be a $K$-algebra. Suppose $S$ has a pseudo-degree function, $\chi$.

Let $a \in N(S)$, with $m=\chi(a)>0$, such that $C_S(a)$ satisfies condition $D(\ell)$ for some positive integer $\ell$. The algebra generated by $a$, which we write $K[a]$, is isomorphic to a polynomial ring in one variable over $K$.  Also $C_S(a)$ is a free left $K[a]$-module of rank at most $\ell m$.

\end{theorem}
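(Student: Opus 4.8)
The plan is to adapt Amitsur's classical argument, as axiomatized through the pseudo-degree function and condition $D(\ell)$, carefully tracking where associativity is used and exploiting that $a$ lies in the nucleus.

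First I would establish that $K[a]$ is a polynomial ring. Since $a \in N(S)$, the subalgebra $K[a]$ is associative and commutative; it is a polynomial ring iff $a$ satisfies no nontrivial polynomial relation over $K$. But $\chi$ restricted to $K[a]$ behaves like a degree: since $\chi(a)=m>0$ and $\chi$ maps nonzero scalars to $0$ (this follows from Remark~\ref{remScalarDeg}, as $C_S(a)$ satisfies $D(\ell)$), the multiplicativity and ultrametric properties give $\chi\!\left(\sum_{i=0}^d \alpha_i a^i\right) = dm$ whenever $\alpha_d \neq 0$ (using Lemma~\ref{lemAdd} to see the top term dominates). Hence no nonzero polynomial in $a$ vanishes, so $K[a] \cong K[t]$.

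Next, the heart of the matter: building the basis for $C_S(a)$ as a $K[a]$-module. Note $C_S(a)$ is a subalgebra containing $K[a]$, and $\chi|_{C_S(a)}$ takes only nonnegative values by $D(\ell)$. The key combinatorial device is a \emph{division-with-remainder} statement: for any $b \in C_S(a)$, I want to subtract off $K[a]$-multiples of finitely many fixed elements to strictly decrease $\chi$. Concretely, I would look at the set of values $\chi(b)$ for $b \in C_S(a)$, partition by residue class mod $m$, and in each class choose elements of minimal $\chi$-value; condition $D(\ell)$ bounds by $\ell$ the number of $\chi$-independent elements mapping to any single integer, which is what forces the rank bound $\ell m$ (roughly $\ell$ generators per residue class, $m$ classes). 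The argument is: given $b\in C_S(a)$ with $\chi(b)=n$, write $n = qm + r$; then $a^q$ times one of the chosen generators $b_j$ in class $r$ has the same $\chi$-value $n$, and since the leading behaviour in class $r$ at level $n$ is spanned (via $D(\ell)$ applied to the relevant collection including $a$-power-shifts of the generators) by these, some $K$-combination $b - \sum_j p_j(a) b_j$ has strictly smaller $\chi$-value. Iterating and using that $\chi$ is bounded below on $C_S(a)$ and drops by integer steps, the process terminates, giving a finite spanning set. Freeness then follows because a nontrivial relation $\sum p_j(a) b_j = 0$ with the $p_j$ not all zero would, by comparing the dominant $\chi$-terms (which cannot cancel, as that would contradict the $\chi$-independence built into the choice of the $b_j$ together with the injectivity of multiplication by powers of $a$ on $\chi$-values), be impossible.

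The main obstacle I anticipate is the bookkeeping needed to make ``leading term'' arguments work without associativity of $S$ itself. Multiplicativity of $\chi$ still holds globally, but I must be careful that expressions like $p(a)b_j$ are unambiguous and that $\chi(p(a)b_j) = \chi(p(a)) + \chi(b_j)$ — this is fine because $a \in N(S)$, so all the re-bracketings of products involving $a$ and powers of $a$ are legitimate, and $K[a]$ is associative. A second delicate point is verifying that when I apply $D(\ell)$ I always feed it elements of $C_S(a)$ all having the \emph{same} $\chi$-value: the $a$-power shifts $a^{q_j} b_j$ must be arranged to land on a common level, which is a matter of choosing the generators' levels consistently within each residue class and is where the factor $m$ (number of classes) versus $\ell$ (bound per class) gets pinned down. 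Finally, one should check $C_S(a)$ is closed under the operations used — it is a subalgebra since $a$ is nuclear, so if $b,c$ commute with $a$ then so does $bc$, and $K[a]$-multiples of elements of $C_S(a)$ stay in $C_S(a)$ — which legitimizes viewing $C_S(a)$ as a $K[a]$-module in the first place.
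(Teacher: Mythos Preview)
Your plan follows the same Amitsur-style template as the paper, and the peripheral points (that $K[a]\cong K[t]$ via $\chi$, that $C_S(a)$ is a subalgebra and a $K[a]$-module because $a\in N(S)$, that nonzero scalars have $\chi$-value~$0$) are all handled correctly. The core argument, however, has a genuine gap in how you choose the generators.

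You propose to pick, in each residue class $r$ mod $m$, a maximal $\chi$-independent set of elements \emph{at the minimal level} $n_r$ occurring in that class, and then to run a descent: for $b$ with $\chi(b)=n\equiv r$, shift those generators up by a power of $a$ to level $n$ and use $D(\ell)$ to knock $\chi$ down. The problem is that $D(\ell)$ only says that any $\ell+1$ elements at a common level admit a $K$-dependence modulo lower levels; it does \emph{not} say that the ``leading space'' at level $n$ is spanned by the $a$-shifts of your minimal-level generators. Concretely, the map $b\mapsto ab$ induces an \emph{injection} from the leading space at level $n_r$ into the leading space at level $n_r+m$, but nothing forces it to be surjective: the dimension (which is $\le\ell$ throughout) may jump as you climb the class. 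When it does, a new generator is needed at that higher level, and your descent stalls because $b$ need not be a $K$-combination of the $a$-shifted old generators modulo lower $\chi$. The same issue undercuts your freeness argument, since you appeal to ``$\chi$-independence built into the choice of the $b_j$'' without having established the analogue of the degree formula.

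The paper closes this gap by selecting the $b_i$ greedily---$b_{k+1}$ has minimal $\chi$ among elements of $C_S(a)$ not in the $K[a]$-span of $b_1,\dots,b_k$---so that generators at higher levels within a class are picked up automatically when needed. The real work is then proving, by induction on the right-hand side, the key identity
\[
\chi\!\left(\sum_i \phi_i b_i\right)=\max_i\bigl(\chi(\phi_i)+\chi(b_i)\bigr),
\]
which simultaneously yields freeness and lets one bound the number of $b_i$ in each residue class by $\ell$ (shift $\ell+1$ of them to a common level with powers of $a$, apply $D(\ell)$, and contradict the identity). If you reorganize your argument around this greedy choice and prove this formula, the rest of your outline goes through.
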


\begin{proof}
	
Note first that $C_S(a)$ is an algebra if $a\in N(S)$.   
	
Since $a\in N(S)$, $K[a]$ is an associative and commutative algebra. Since $\chi(a)>0$ it follows that $K[a]$ is isomorphic to a polynomial ring. It is easy to check that $C_S(a)$ is closed under multiplication from the left by elements in $K[a]$. 	

Construct a sequence $b_1,b_2,\ldots $ by setting $b_1=1$ and choosing $b_{k+1} \in C_S(a)$ such that $\chi(b_{k+1})$ is minimal subject to the restriction that $b_{k+1}$ does not lie in the $K[a]$-linear span of $\{b_1, \ldots, b_k\}$. We will show later in the proof that such a sequence has at most $lm$ elements. 

We first claim that 
\begin{equation}\label{degreeSum}
\chi \left(\sum_{i=1}^k \phi_i b_i\right) = \max_{i\leq k} (\chi(\phi_i) +\chi(b_i)),
\end{equation} 
for any $\phi_1,\ldots \phi_k \in K[a]$. We show this by induction on $n=\max_{i\leq k} (\chi(\phi_i) +\chi(b_i))$. It is clear that the left-hand side of \eqref{degreeSum} is never greater than the right-hand side. When $n=-\infty$ Equation \eqref{degreeSum} obviously holds. If $n=0$, Equation \eqref{degreeSum} holds since $\chi(b)\geq 0$ for all non-zero $b \in C_S(a)$, and $\sum_{i=1}^k \phi_i b_i=0$ would imply that all $\phi_i=0$ by the choice of the $b_i$. That $\chi(b) \geq 0$ for all non-zero $b$ in $C_S(a)$ also means that no value of $n$ between $-\infty$ and $0$ is possible. 

For the induction step, assume \eqref{degreeSum} holds when the right-hand side is strictly less than $n$. To verify that it holds for $n$ as well, we can assume without loss of generality  that $\chi(\phi_k) + \chi(b_k)=n$, since if $\chi(\phi_j b_j)<n$ for some term $\phi_j b_j$ we can drop it without affecting either side of \eqref{degreeSum}, by Lemma \ref{lemAdd}. If $\phi_k \in K$ then $\chi(\phi_k)=0$, by Remark \ref{remScalarDeg}, and thus $\chi(b_k)=n$. By the choice of $b_k$ it then follows that $\chi(\sum_{i=1}^k \phi_i b_i) \geq n$, as otherwise $\sum_{i=1}^k \phi_i b_i$ would have been picked instead of $b_k$. If $\phi_k \notin K$, then $\chi(b_k) <n$ and thus $\chi(b_i) < n$ for $i=1,\ldots k$. Let $r_1,\ldots,r_k \in K$ and $\xi_1,\ldots,\xi_k \in K[a]$ be such that 
$\phi_i = a\xi_i +r_i$ for $i=1,\ldots,k$. We have $\chi(\sum_{i=1}^k r_i b_i) <n$ and thus by Lemma \ref{lemAdd}, the fact that $a\in N(S)$ and the assumptions on $\chi$ we get
\begin{equation*}
\chi\left(\sum_{i=1}^k \phi_i b_i \right) = \chi\left(\sum_{i=1}^k a\xi_i b_i +\sum_{i=1}^k r_ib_i \right) = \chi\left(a \sum_{i=1}^k \xi_i b_i \right) = m+\chi\left(\sum_{i=1}^k \xi_i b_i \right).
\end{equation*} 
We also have that $\max_{i\leq k} (\chi(\phi_i) +\chi(b_i)) = m +\max_{i\leq k} (\chi(\xi_i) +\chi(b_i))$.  By the induction hypothesis 
\begin{equation*}
\chi\left(\sum_{i=1}^k \xi_i b_i \right) = \max_{i\leq k} (\chi(\xi_i) +\chi(b_i)),
\end{equation*}
which completes the induction step. 

We now show that if $\chi(b_i) = \chi(b_j)$ for some $i\leq j$ then $j-i<l.$ Suppose $b_{i} ,\ldots,b_{i+l} $ all are mapped to the same integer, $q$, by $\chi$. Then there exists $\alpha_i,\ldots,\alpha_{i+l} \in K$, not all zero, such that 
\begin{equation*}
\chi\left(\sum_{p=i}^{i+l} \alpha_p b_p \right) <q,
\end{equation*}
by condition $D(\ell)$. This contradicts \eqref{degreeSum}.

It remains only to show that the sequence $(b_i)$ contains at most $\ell m$ elements. We will prove that every residue class $\pmod{m}$ can only contain at most $\ell$ elements. Suppose to the contrary, that we had elements $c_1,\ldots, c_{\ell+1}$, belonging to the sequence $(b_i)$ and all satisfying that $ \chi(c_i) \equiv \nu \imod{m}$. Set $k=\max_{1\leq i \leq l+1} (\chi(c_i))$ and define $\gamma_i = a^{\frac{k-\chi(c_i)}{m}}.$ Then $\chi(\gamma_i c_i) = k$, for all $i \in \{1, \ldots, \ell+1\}$, which implies that there exists  $\alpha_1,\ldots,\alpha_{\ell+1} \in K$, such that 
\begin{equation*}
\chi\left( \sum_{p=1}^{\ell +1} \alpha_i (\gamma_i c_i) \right) < k.
\end{equation*}
But this once again contradicts \eqref{degreeSum}.  
 
\end{proof}

We can also prove a result on the algebraic dependence of pairs of commuting elements.

\begin{corollary}\label{corAlgDep}
Let $S$ be a $K$-algebra with a pseudo-degree function, $\chi$. Let $a \in N(S)$ be  such that $C_S(a)$ satisfies Condition $D(\ell)$ for some $\ell>0$. Let $b \in C_S(a) \cap N_l(S)$. Then there exists a nonzero polynomial $P(s,t)\in K[s,t]$ such that $P(a,b) =0$. (Note that $P(a,b)$ is well-defined .)
\end{corollary}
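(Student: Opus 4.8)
The plan is to exploit Theorem \ref{thm_BoundDim}, which tells us that $C_S(a)$ is a free left $K[a]$-module of finite rank, say $r \leq \ell m$ where $m = \chi(a)$. First I would observe that the hypotheses of Theorem \ref{thm_BoundDim} are met: $a \in N(S)$ and $C_S(a)$ satisfies $D(\ell)$, so in particular $\chi(a) \geq 0$; if $\chi(a) = 0$ then $a$ together with the constants would force $C_S(a)$ to violate $D(\ell)$ by a dimension count (or one checks $\chi(a)>0$ is forced just as in Remark \ref{remScalarDeg}), so $m = \chi(a) > 0$ and the theorem applies. Fix a $K[a]$-module basis $\{b_1 = 1, b_2, \ldots, b_r\}$ of $C_S(a)$.

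Next, since $b \in C_S(a)$, every power $b^j$ lies in $C_S(a)$ — here I must be careful that $b^j$ is unambiguous and that the powers behave well: because $b \in N_l(S)$, the subalgebra $K[b]$ generated by $b$ is associative, so $b^j$ is well-defined, and moreover $b^j \in C_S(a)$ for all $j \geq 0$ (the product of two elements commuting with $a$ again commutes with $a$ when $a \in N(S)$, as noted at the start of the proof of Theorem \ref{thm_BoundDim}). Hence for each $j$ we may write
\begin{equation*}
b^j = \sum_{i=1}^{r} \phi_{ij}\, b_i, \qquad \phi_{ij} \in K[a].
\end{equation*}

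Now I would run the standard Amitsur/Burchnall--Chaundy linear-algebra argument. Consider the $r+1$ elements $1 = b^0, b^1, \ldots, b^r$ of the free $K[a]$-module $C_S(a)$ of rank $r$; since $K[a]$ is (isomorphic to) a polynomial ring, hence a domain with fraction field $k(a)$, these $r+1$ vectors are linearly dependent over $k(a)$, and clearing denominators gives polynomials $p_0(a), \ldots, p_r(a) \in K[a]$, not all zero, with $\sum_{j=0}^{r} p_j(a)\, b^j = 0$. Writing $p_j(a) = \sum_s c_{js} a^s$ and collecting, this reads $P(a,b) = 0$ for the nonzero polynomial $P(s,t) = \sum_{j,s} c_{js} s^s t^j \in K[s,t]$; the expression $P(a,b)$ is unambiguous because $a \in N(S)$ and $K[a]$ is a commutative associative algebra over which left multiplication is well-behaved, and because $b \in N_l(S)$ makes $K[b]$ associative — more precisely, one notes $a \in N(S)$ and $b \in C_S(a) \cap N_l(S)$ guarantee that any monomial $a^s b^j$ and sums thereof are bracketed unambiguously.

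I do not expect a serious obstacle; the one point requiring genuine care is the well-definedness and associativity bookkeeping for $P(a,b)$ in the non-associative setting — specifically justifying that $\sum_j p_j(a) b^j$ can be re-expanded as $\sum_{j,s} c_{js} a^s b^j$ with the $a$-part and $b$-part each associating internally and with each other. This follows from $a \in N(S)$ together with $b \in N_l(S)$, but it is the step where the non-associative hypotheses are actually used and should be spelled out; everything else is the classical rank argument over the polynomial ring $K[a]$.
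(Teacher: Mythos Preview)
Your proof is correct and follows essentially the same route as the paper: invoke Theorem~\ref{thm_BoundDim} to obtain that $C_S(a)$ is a free $K[a]$-module of finite rank, then observe that the powers $1,b,b^2,\ldots$ cannot all be $K[a]$-independent. Your extra care with the fraction field and with the associativity bookkeeping for $P(a,b)$ is welcome, as the paper glosses over both. One small wobble: when $\chi(a)=0$ condition $D(\ell)$ is not violated as you suggest---rather, applying it to $1,a,\ldots,a^{\ell}$ forces a nontrivial relation $\sum \alpha_i a^i=0$, so $a$ is algebraic over $K$ and the corollary is immediate; the paper's proof simply presumes $\chi(a)>0$ without comment.
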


\begin{proof}

Since $b\in  N_l(S)$ it follows that $b^k $ is well-defined for every positive integer $k$. Since $C_S(a)$ has finite rank as a left $K[a]$-module the elements $b,b^2,\ldots$ can not all be linearly independent over $K[a]$. Thus there exists $f_1(x),\ldots, f_k(x) \in K[x]$ , not all zero,  such that $\sum_{i=0}^k f_i(a) b^i =0$. Then $P(s,t) = \sum_{i=0}^k f_i(s) t^i =0$ is a polynomial with the desired property. 

\end{proof}

When $C_S(a)$ satisfies condition $D(1)$ we can say a little bit more. 

\begin{theorem}\label{thmComCen}
Let $K$ be a field and suppose $S$ is a $K$-algebra. Let $S$ have a pseudo-degree function, $\chi$. If $a\in N(S)$ satisfies $\chi(a)=m>0$ and $C_S(a)$ satisfies condition $D(1)$ then $C_S(a)$ has a finite basis as a $K[a]$-module, the cardinality of which divides $m$.

\end{theorem}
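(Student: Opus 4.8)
The plan is to deduce this from Theorem~\ref{thm_BoundDim} by examining the set of $\chi$-values attained on $C_S(a)$. Applying Theorem~\ref{thm_BoundDim} with $\ell = 1$ gives that $K[a] \cong K[x]$ and that $C_S(a)$ is a free left $K[a]$-module of some rank $k \le m$; let $b_1 = 1, b_2, \dots, b_k$ be the basis produced in the proof of that theorem. Inspecting that proof, the values $\chi(b_1) < \chi(b_2) < \dots < \chi(b_k)$ lie in pairwise distinct residue classes modulo $m$ (this is exactly the ``at most $\ell$ per residue class'' argument specialized to $\ell = 1$), so in particular $\{\, \chi(b_i) \bmod m \; : \; 1 \le i \le k \,\}$ has exactly $k$ elements.

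Next I would study the value set $\Gamma = \{\, \chi(c) \; : \; c \in C_S(a) \setminus \{0\} \,\} \subseteq \Z_{\ge 0}$. It is a submonoid of $(\Z_{\ge 0}, +)$: it contains $\chi(1) = 0$, it consists of non-negative integers by condition $D(1)$, and it is closed under addition because $C_S(a)$ is a subalgebra (here $a \in N(S)$ is used) and $\chi(cc') = \chi(c) + \chi(c')$, the latter also guaranteeing that a product of nonzero elements of $C_S(a)$ is nonzero. Using \eqref{degreeSum}, together with the observation that every nonzero element of $K[a]$ has $\chi$-value a non-negative multiple of $m$ (a consequence of Remark~\ref{remScalarDeg} and Lemma~\ref{lemAdd}), I would then show
\begin{equation*}
\Gamma = \bigcup_{i=1}^{k} \bigl( \chi(b_i) + m\,\Z_{\ge 0} \bigr).
\end{equation*}
The inclusion ``$\supseteq$'' follows since $a^{j} b_i \in C_S(a)$ and $\chi(a^{j} b_i) = jm + \chi(b_i)$; for ``$\subseteq$'', write a general nonzero element of $C_S(a)$ as $\sum_{i=1}^{k} \phi_i b_i$ with $\phi_i \in K[a]$ and read off its $\chi$-value from \eqref{degreeSum}, noting that the maximizing term contributes $jm + \chi(b_i)$ for some $j \ge 0$.

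Finally I would reduce modulo $m$: the image of $\Gamma$ under the quotient map $\Z_{\ge 0} \to \Z/m\Z$ equals $\{\, \chi(b_i) \bmod m \; : \; 1 \le i \le k \,\}$, a set of cardinality exactly $k$. On the other hand this image is a submonoid of the finite group $\Z/m\Z$, hence a subgroup, so by Lagrange's theorem its order $k$ divides $m$. This is precisely the assertion that the rank of $C_S(a)$ as a $K[a]$-module divides $m$. The step that needs a little care is the exact identification of $\Gamma$ with the displayed union of arithmetic progressions; everything else is bookkeeping built on \eqref{degreeSum} and the structure theorem already in hand, and the one genuinely new ingredient is the elementary remark that a submonoid of a finite group is a subgroup.
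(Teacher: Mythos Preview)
Your proposal is correct and follows essentially the same approach as the paper's proof: both extract from Theorem~\ref{thm_BoundDim} that the residues $\chi(b_i)\bmod m$ are distinct, then use that $C_S(a)$ is a subalgebra together with \eqref{degreeSum} to show these residues form a subgroup of $\Z/m\Z$, and conclude by Lagrange. The only cosmetic difference is that the paper verifies closure under addition directly by expanding $b_ib_j$ in the basis and reading off $\chi(b_ib_j)$ via \eqref{degreeSum}, whereas you route the same computation through the value monoid $\Gamma$ and the observation that a submonoid of a finite group is a subgroup; this detour is harmless but not needed.
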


\begin{proof}

By Theorem \ref{thm_BoundDim} it is clear that there is a subset $H$ of $\{1, \ldots, m\}$
and elements $(b_i)_{i \in H}$ such that the $b_i$ form a basis for $C_S(a)$. By the proof of Theorem \ref{thm_BoundDim} it is also clear that $\chi(b_i) \neq \chi(b_j)$ if $i \neq j$. Without loss of generality we can assume $\chi(b_i)=i$ for all $i  \in H$. We can map $H$ into $\Z_m$ in a natural way. Denote the image by $G$. We want to show $G$ is a subgroup, for which it is enough to show that it is closed under addition. 

Suppose $g,h \in G$. There exists $i,j \in H$, with $i \equiv g \imod{m}$ and $j \equiv h \imod{m}$. We can write $b_ib_j= \sum_{k\in H} \phi_k b_k$, for some $\{\sigma_k \}$. It follows that 
\begin{equation*}
g+h \equiv i+j = \chi(b_ib_j)  = \max (\chi(\phi_k)+\chi(b_k)) \equiv \chi(b_r ) =r \imod{m} 
\end{equation*}
 for some $r \in H$. 

Since $G$ is a subgroup of $\Z_m$ it is clear that the cardinality of $G$, which is also the cardinality of $H$, must divide $m$.

\end{proof}

\section*{Example}

	We start with some notation we will use in this section. Let $\mathbb{O}$ be the octonions. Set $R=\mathbb{O}[y]$, let $\sigma$ be an $\mathbb{R}$-algebra endomorphism of $R$ such that $s=\deg_y(\sigma(y))>1$ and let $\delta$ be an $\mathbb{R}$-linear map such that $\delta(rs)= \sigma(r)\delta(s)+\delta(r)s$ for all $r,s\in R$. One can then form the non-associative Ore extension $S=\OreGen$.

	\begin{lemma}
	 If $a \in S $ is an element of positive degree with real coefficients, then $a$ belongs to $N(S)$.
	\end{lemma}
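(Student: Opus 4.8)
The plan is to show that an element $a \in S = \OreGen$ with real coefficients and positive degree lies in each of $N_l(S)$, $N_m(S)$, and $N_r(S)$. The key structural observation is that, since $\sigma$ and $\delta$ are $\mathbb{R}$-algebra maps (respectively $\mathbb{R}$-linear) fixing the real numbers, the real subalgebra $R_0 = \mathbb{R}[y] \subseteq R$ is $\sigma$- and $\delta$-stable, and the subring it generates together with $x$, call it $S_0 = R_0[x;\sigma|_{R_0},\delta|_{R_0}]$, is an \emph{associative} Ore extension over the commutative ring $R_0$ sitting inside $\R$, which lies in the center of $\mathbb{O}$ and hence in the nucleus of everything in sight. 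So the real thing to prove is that every element of $S_0$ associates with all of $S$ from all three sides.

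First I would reduce to monomials: since $N_l(S)$, $N_m(S)$, $N_r(S)$ are $\mathbb{R}$-subspaces (in fact subalgebras), and an arbitrary element with real coefficients is an $\mathbb{R}$-linear combination of monomials $r y^i x^j$ with $r \in \mathbb{R}$, it suffices to show each such monomial is nuclear; and since $\mathbb{R}$ is central in $\mathbb{O}$, it further suffices to handle $y^i x^j$, and then by multiplicativity of the nucleus to handle $y$ and $x$ separately. For $y$: multiplication by $y$ from the left or right only involves the coefficient $\mathbb{O}$ through the associative product of $\mathbb{O}[y]$, so the associator $(y\cdot u)\cdot v - y\cdot(u\cdot v)$ for $u,v \in S$ vanishes because $y$ is central and $R = \mathbb{O}[y]$ is a ring; the point is that $y$ commutes with the $\sigma$- and $\delta$-twists since those are $\mathbb{R}[y]$-algebra-ish — more precisely one checks $x \cdot y = \sigma(y)x + \delta(y)$ has $\sigma(y),\delta(y) \in \mathbb{R}[y]$, and $\mathbb{R}[y]$ is commutative and central in $\mathbb{O}[y]$. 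For $x$: one must verify that $x$ associates with $r y^i$ and with $x$ itself against arbitrary elements, using that the defining relation $x r = \sigma(r)x + \delta(r)$ produces, when $r$ has real coefficients, again an element with real coefficients, so no genuinely non-associative octonion products are ever triggered.

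The cleanest way to organize the whole argument is: (i) establish that $S_0$ is associative — this is the standard verification that $\sigma$ restricted to the commutative ring $R_0$ together with the $\sigma$-derivation $\delta|_{R_0}$ gives an honest Ore extension, for which associativity is classical; (ii) establish that $S$ is a \emph{left, middle, and right} $S_0$-module in a way compatible with the ring structure, i.e. that the associators $[s_0, u, v]$, $[u, s_0, v]$, $[u, v, s_0]$ vanish for $s_0 \in S_0$ and $u,v\in S$. Step (ii) is where one expands $u = \sum u_{kl} y^k x^l$, $v=\sum v_{pq} y^p x^q$ with $u_{kl}, v_{pq} \in \mathbb{O}$, pushes all $x$'s to the right using the twisting relations, and observes that every octonion that appears multiplied against another octonion in the expansion of either side comes paired in the \emph{same} order with a real scalar coming from $S_0$; since $\mathbb{R}$ is in the nucleus (indeed center) of $\mathbb{O}$, the two sides agree term by term.

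I expect the main obstacle to be purely bookkeeping: carefully tracking, in the expansion of a triple product $(uv)w$ versus $u(vw)$ in a non-associative Ore extension, exactly which octonion multiplications occur and in which order, and confirming that inserting a real-coefficient factor never forces an associator of three honest octonions. There is no deep idea — the content is that $\mathbb{R} = Z(\mathbb{O}) \subseteq N(\mathbb{O})$ and that $\sigma,\delta$ preserve real coefficients — but writing the monomial expansion cleanly enough to make the cancellation transparent is the part that needs care. A slick alternative, if one wants to avoid the computation, is to invoke a general principle (analogous to results on non-associative Ore extensions in the literature) that for a non-associative Ore extension $T[x;\sigma,\delta]$ the nucleus contains $N(T)[x;\sigma,\delta]$ whenever $\sigma(N(T))\subseteq N(T)$ and $\delta(N(T))\subseteq N(T)$; applied here with $T = \mathbb{O}[y]$, $N(T) = \mathbb{R}[y]$, this gives the lemma immediately, and an element of positive degree with real coefficients lies in $\mathbb{R}[y][x;\sigma,\delta] = N(T)[x;\sigma,\delta] \subseteq N(S)$.
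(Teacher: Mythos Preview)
Your core strategy---show that the relevant generators lie in $N(S)$ and then use that $N(S)$ is a subring---is exactly the paper's argument. The paper's proof is two lines: $\mathbb{R}\subseteq N(S)$ is clear, $x\in N(S)$ follows from a cited result on non-associative Ore extensions, and $N(S)$ is closed under sums and products.

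Where you diverge is in reading ``real coefficients'' as ``coefficients in $\mathbb{R}[y]$'' rather than ``coefficients in $\mathbb{R}$''. The paper means the latter: compare with the second example later in the paper, where the hypothesis is explicitly ``coefficients that belong to $\mathbb{R}[y]$'' and the proof that $y\in N(T)$ occupies most of a page. In the present lemma only $\mathbb{R}$ and $x$ need to be shown nuclear, and your discussion of $y$ is superfluous.

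More importantly, the extra claim you make to handle $y$ is not justified by the hypotheses. You assert that $R_0=\mathbb{R}[y]$ is $\sigma$- and $\delta$-stable, equivalently that $\sigma(y),\delta(y)\in\mathbb{R}[y]$. But the setup only says $\sigma$ is an $\mathbb{R}$-algebra endomorphism of $\mathbb{O}[y]$ with $\deg_y\sigma(y)>1$ and that $\delta$ is $\mathbb{R}$-linear; nothing forces $\sigma(y)$ or $\delta(y)$ to have real coefficients. Your ``slick alternative'' has the same issue: the hypothesis $\sigma(N(R))\subseteq N(R)$ in that general principle is precisely $\sigma(\mathbb{R}[y])\subseteq\mathbb{R}[y]$, which you have not been given. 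So for the stronger statement you set out to prove there is a genuine gap; for the statement actually intended, your argument (stripped of the $y$ discussion) is correct and coincides with the paper's.
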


	\begin{proof}
	It is clear that $\mathbb{R}\subseteq N(S)$. From \cite[Proposition 3.4]{NonAssocOre} it follows that $X\in N(S)$. So $a \in N(S)$, since $N(S)$ is a subring of $S$. 
	
		\end{proof}

	\begin{theorem}
	If $a \in S $ is an element of positive degree with real coefficients then $C_S(a)$ is a free $\mathbb{R}[a]$-module of finite rank.  
	\end{theorem}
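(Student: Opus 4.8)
The strategy is to realise the claim as a special case of Theorem~\ref{thm_BoundDim}, applied with $K=\mathbb{R}$ and with $\chi=\deg_x$ the degree in $x$. Three things then have to be verified: that $\deg_x$ is a pseudo-degree function on $S$; that $a\in N(S)$ with $\chi(a)>0$; and that $C_S(a)$ satisfies condition $D(\ell)$ for some $\ell$. The second is immediate: by the preceding lemma $a\in N(S)$, and $\chi(a)=\deg_x a = m>0$ by hypothesis. Since $\CHAR\mathbb{R}=0\neq 2$, condition $D(\ell)$ makes sense, so everything reduces to the first and third points.

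For the first point I would start with two observations about $R=\mathbb{O}[y]$ and $\sigma$. Because $\mathbb{O}$ is a division algebra it has no zero divisors, and comparing leading coefficients shows the same for $R=\mathbb{O}[y]$, with $\deg_y$ additive on $R$. Next, $\sigma$ fixes $\mathbb{R}$ pointwise (being a unital $\mathbb{R}$-linear ring map), so for each imaginary unit $e$ of $\mathbb{O}$ we get $\sigma(e)^2=\sigma(e^2)=-1$; since an element of $R$ squaring to a nonzero scalar must have $y$-degree $0$, this gives $\sigma(e)\in\mathbb{O}$ and hence $\sigma(\mathbb{O})\subseteq\mathbb{O}$. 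Expanding $\sigma\big(\sum c_i y^i\big)=\sum\sigma(c_i)\sigma(y)^i$ and using $\deg_y\sigma(y)=s$ then yields $\deg_y(\sigma(p))=s\cdot\deg_y(p)$ for all $p\in R$; in particular $\sigma$ is injective on $R$. With $R$ having no zero divisors and $\sigma$ injective, the standard leading-term identity $\mathrm{lead}_x(fg)=\mathrm{lead}_x(f)\,\sigma^{\deg_x f}(\mathrm{lead}_x g)$ holds in $S$ — the re-association needed to bring $(\mathrm{lead}_x(f)\,x^{\deg_x f})(\mathrm{lead}_x(g)\,x^{\deg_x g})$ into normal form is legitimate because $x$, and therefore every power of $x$, lies in $N(S)$. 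Consequently $\deg_x(fg)=\deg_x f+\deg_x g$ and $\deg_x(f+g)\le\max(\deg_x f,\deg_x g)$, so $\deg_x$ is a pseudo-degree function.

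For the third point, note $\deg_x\ge 0$ on all of $S$, so only the linear-dependence clause of $D(\ell)$ is at stake. The key step is to show that the leading $x$-coefficient $c\in R$ of any nonzero $b\in C_S(a)$ lies in $\mathbb{O}$. Write $a=\lambda_m x^m+\cdots$ with $\lambda_m\in\mathbb{R}\setminus\{0\}$ and $\deg_x b=d$. Comparing the (degree-$(m+d)$) leading $x$-coefficients of $ab$ and $ba$ via the identity above, and using that $\sigma$ fixes $\lambda_m$ and that $\lambda_m$ is a central unit, gives $\lambda_m\sigma^m(c)=\lambda_m c$, hence $\sigma^m(c)=c$. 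But $\deg_y(\sigma^m(c))=s^m\deg_y(c)$ with $s^m>1$, forcing $\deg_y(c)=0$, i.e.\ $c\in\mathbb{O}$. Since $\dim_{\mathbb{R}}\mathbb{O}=8$, any nine elements of $C_S(a)$ sharing the same $x$-degree $d$ have $\mathbb{R}$-linearly dependent leading coefficients, so some nontrivial real combination of them has $x$-degree strictly below $d$. Thus $C_S(a)$ satisfies $D(8)$, and Theorem~\ref{thm_BoundDim} yields that $\mathbb{R}[a]$ is a polynomial ring in one variable and that $C_S(a)$ is a free $\mathbb{R}[a]$-module of rank at most $8m$ — in particular of finite rank.

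The step I expect to require the most care is verifying that $\deg_x$ is genuinely multiplicative on the \emph{non-associative} extension $S$: one must re-associate products such as $(\lambda_m x^m)(c x^d)$ to reach normal form, which is permissible only because $x$ is nuclear, so all $x^i$ associate with everything. Once that is granted, the leading-term formula is routine, and the auxiliary facts $\sigma(\mathbb{O})\subseteq\mathbb{O}$ and $\deg_y\circ\sigma=s\cdot\deg_y$ follow cleanly from the $e^2=-1$ trick together with the absence of zero divisors in $\mathbb{O}[y]$.
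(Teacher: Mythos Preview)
Your proof is correct and follows essentially the same route as the paper: take $\chi=\deg_x$ as the pseudo-degree function, compare leading $x$-coefficients in $ab=ba$ to force the $y$-degree of the leading coefficient of any $b\in C_S(a)$ to be zero (hence to lie in $\mathbb{O}$), and then invoke Theorem~\ref{thm_BoundDim} with $\ell=8$. You are in fact more careful than the paper in justifying that $\deg_x$ is multiplicative in the non-associative setting and that $\sigma(\mathbb{O})\subseteq\mathbb{O}$ with $\deg_y\circ\sigma=s\cdot\deg_y$; the paper simply asserts these and writes the general leading-coefficient relation $a_m\sigma^m(b_n)=b_n\sigma^n(a_m)$ before comparing $y$-degrees, whereas you exploit $a_m\in\mathbb{R}$ directly to get $\sigma^m(c)=c$.
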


	\begin{proof}

	 We shall apply Theorem \ref{thmComCen}. To do so we need a pseudo-degree function. 
	
	The notion of the degree of an element in $S$ with respect to $x$ is obviously well-defined. Denote the degree of an element $b$ by $\chi(b)$. It is easy to see that $\chi$ satisfies all the requirement to be a pseudo-degree function. We proceed to show that $C_S(a)$ satisfies condition $D(8)$. Certainly it is true that $\chi(b)\geq 0$ for all nonzero $b \in C_S(a)$. 
	
	Let $b$ be a nonzero element of $S$ that commutes with $a$, such that $\chi(b) =n$. Suppose $\chi(a) =m$. By equating the highest order coefficient of $ab$ and $ba$ we find that 
	\begin{equation}\label{eqHigCoe}
		a_m \sigma^m (b_n) = b_n \sigma^n (a_m), 
	\end{equation}
	where $a_m$ and $b_n$ denote the highest order coefficients of $a$ and $b$, respectively. (Recall that these are polynomials in $y$.) We equate the degree in $y$ of both sides of \eqref{eqHigCoe} and find that 
	\begin{equation*}
		\deg_y(a_m) +s^m \deg_y(b_n) = \deg_y(b_n) +s^n \deg_y (a_m),
	\end{equation*}
	which determines the degree of $b_n$ uniquely. It follows that the solutions of \eqref{eqHigCoe} belong a $\mathbb{R}$-subspace of $\mathbb{O}[y]$ that is  eight-dimensional. This in turn implies that condition $D(8)$ is fulfilled. 
	
	We have now verified all the hypothesis necessary to apply Theorem \ref{thm_BoundDim}. 

	\end{proof}

	\begin{lemma}
	Let $\delta$ be the usual derivative on $R= \mathbb{O}[y]$ and set $T=R[x; \identity, \delta]$. If $a$ is an element of $T$ of positive degree with respect to $x$ and with coefficents that belong to $\mathbb{R}[y]$ then $a\in N(T)$. 
	\end{lemma}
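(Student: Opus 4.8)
The plan is to mirror the structure of the previously proved lemma in this Example section, the one asserting that an element of $S=R[x;\sigma,\delta]$ of positive degree with real coefficients lies in $N(S)$. The nucleus is a subalgebra, so it suffices to exhibit a generating set for such an element that already lies in $N(T)$. First I would note that $\mathbb{R}\subseteq N(T)$ trivially, since $\mathbb{R}$ is contained in the center of the coefficient ring $\mathbb{O}[y]$ and the nucleus of any algebra contains its center. Indeed $\mathbb{R}[y]\subseteq N(T)$: polynomials in $y$ with real coefficients multiply associatively against anything because octonion multiplication is already associative whenever one of the three factors is real, and the twisting by $\delta$ does not disturb this as long as $\delta$ restricted to $\mathbb{R}[y]$ stays inside $\mathbb{R}[y]$, which it does since $\delta$ is the usual derivative.

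Next I would handle the indeterminate $x$. The key input is the analogue of \cite[Proposition 3.4]{NonAssocOre} that was invoked in the earlier lemma of this section to conclude $X\in N(S)$; the same proposition (or its specialization to $\sigma=\identity$) should give $x\in N(T)$ for the differential operator ring $T=R[x;\identity,\delta]$. One must check the hypotheses of that proposition are met in the present setting — essentially that $\sigma=\identity_R$ is an endomorphism and $\delta$ is an $\mathbb{R}$-linear $\sigma$-derivation of $R=\mathbb{O}[y]$, both of which are immediate for the ordinary derivative. With $x\in N(T)$ and $\mathbb{R}[y]\subseteq N(T)$ in hand, any element $a$ of positive degree with coefficients in $\mathbb{R}[y]$ is a finite sum of products of elements of $N(T)$, hence lies in $N(T)$ because $N(T)$ is a subalgebra of $T$.

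The main obstacle, if any, is verifying that the cited non-associative Ore extension result applies with $\sigma=\identity$ rather than the generic $\sigma$ used in the bulk of the section; this is really a matter of bookkeeping rather than a genuine difficulty, since $T$ is literally the special case $S$ with $\sigma=\identity$ and a particular $\delta$, and the condition $s=\deg_y(\sigma(y))>1$ used elsewhere in the section is not needed here. A secondary routine point is confirming $\delta(\mathbb{R}[y])\subseteq\mathbb{R}[y]$ and that $\delta$ is $\mathbb{R}$-linear, which are both clear for the usual derivative. So the proof is short: establish $\mathbb{R}[y]\subseteq N(T)$ and $x\in N(T)$, then invoke closure of $N(T)$ under the algebra operations.
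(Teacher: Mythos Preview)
Your overall strategy matches the paper's: reduce to showing that the generators $\mathbb{R}$, $y$, and $x$ lie in $N(T)$, then use that $N(T)$ is a subalgebra. You correctly identify $\mathbb{R}\subseteq N(T)$ as trivial and correctly invoke Proposition~3.4 of the non-associative Ore extension reference for $x\in N(T)$.

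The gap is your treatment of $y$. You assert $\mathbb{R}[y]\subseteq N(T)$ with only the heuristic that ``octonion multiplication is already associative whenever one of the three factors is real, and the twisting by $\delta$ does not disturb this.'' This is precisely the step the paper treats as the main technical content: it verifies $y\in N_l(T)$, $y\in N_m(T)$, and $y\in N_r(T)$ separately, by explicit associator computations that rely on the generalized Leibniz rule. Knowing $y\in Z(R)$ does not automatically yield $y\in N(T)$; one must check that when an associator in $T$ with one argument equal to $y$ is expanded via the Ore relation, every resulting coefficient is an associator in $R$ in which one slot is some $\delta^k(y)\in\mathbb{R}[y]\subseteq N(R)$. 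Your intuition is correct and this does go through, but it requires either the explicit computations the paper carries out or a clean structural argument (for instance, exhibiting an $\mathbb{R}$-algebra isomorphism $T\cong \mathbb{O}\otimes_{\mathbb{R}} A_1(\mathbb{R})$ with the first Weyl algebra, after which $1\otimes A_1(\mathbb{R})\subseteq N(T)$ is immediate). You supply neither. You have also misidentified the ``main obstacle'': whether the cited proposition applies with $\sigma=\identity$ is routine bookkeeping, whereas showing $y\in N(T)$ is where the actual work lies.
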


	\begin{proof}
	It is clear that $\mathbb{R} \subset N(T)$ and  $x\in N(T)$ by \cite[Proposition 3.4]{NonAssocOre}. If we can show that $y \in N(T)$ then it will follow that $a\in N(T)$, since $N(T)$ is a ring. We will use the facts that $y\in N(R)$ and that $x\in N(T)$ in our calculations. 
	
	We first show that $y\in N_l(T)$. It is enough to show that $(y,bx^m, cx^n)=0$ for arbitrary $b,c  \in R$. Since $x^n \in N_r(T)$ one easily sees that $(y,bx^m, cx^n)= (y,bx^m,c)x^n$, so it is enough to prove that $(y,bx^m,c)=0$. But 
	$$(y,bx^m,c)= (y,b,x^mc) = (y,b,\sum_{i=0}^m \delta^{m-i}(c)x^i)= \sum_{i=0}^m(y,b, \delta^{m-i}(c))x^i =0. $$
	
	Now we prove that $y \in N_m(T)$. Let $b,c$ be arbitrary elements of $R$. It is enough to prove that $(bx^m,y,cx^n)=0$, which as before means it is enough to prove that $(bx^m,y,c)=0$.

	We adopt the convention that $\delta^n=0$ if $n$ is negative. We see that 
	\begin{equation*}
	(bx^m)(yc)= \sum_{i=0}^m \binom{m}{i} b \delta^{m-i}(yc).
	\end{equation*}  
	
	And
	\begin{align*}
	&(bx^my)c= \left(\sum_{i=0}^m \binom{m}{i} b\delta^{m-i}(y)x^i\right)c = (byx^m+mbx^{m-1})c=\\
	&= \left(\sum_{i=0}^m \binom{m}{i} (by)\delta^{m-i}(c) x^i\right) +m \sum_{i=0}^{m-1}\binom{m-1}{i} b \delta^{m-1-i}(c)x^i= \\
	& =\sum_{i=0}^m \binom{m}{i}b (y\delta^{m-i}(c)+(m-i)\delta^{m-1-i}(c))x^i.
	\end{align*}
	By the generalized Leibniz rule it follows that $(bx^m)(yc)=(bx^my)c$. 
	
	Remains to show that $y\in N_r(T)$, which for which it is enough to prove that $(bx^m,cx^n,y)=0$ for arbitrary $b,c \in R$. Since $x^n \in N(T)$ it is true that $(bx^m,cx^n,y)= (bx^m,c,x^ny)$. And 
	\begin{equation*}
	(bx^m,c,x^ny)=(bx^m,c,yx^n)+ (bx^m,c,nx^{n-1})= (bx^m,c,y)x^n+(bx^m,c,n)x^{n-1} 
	\end{equation*}
	Since $(bx^m,c,n)=0$, it is enough to prove that $(bx^m,c,y)=0$. But since $x^m \in N(T)$ it follows that $(bx^m,c,y)=(b,x^mc,y)= (b,\sum_{i=0}^m \delta^{m-i}(c)x^i,y)$. So we can further reduce our problem to proving that $(b,dx^i, y)=0$ for arbitrary $b,d\in R$, and since $(b,dx^i,y)= (b,d,x^iy)= (b,d,y)x^i+(b,d,i)x^{i-1}$, this is clearly true.

	\end{proof}

	\begin{theorem}
 	Let $\delta$ be the usual derivative on $R= \mathbb{O}[y]$ and set $T=R[x; \identity, \delta]$. If $a$ is an element of $T$ of positive degree with respect to $x$ and with that belong to $\mathbb{R}[y]$ then $C_T(a)$ is a free $\mathbb{R}[a]$-module of finite rank. 
			
	\end{theorem}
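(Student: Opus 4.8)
The plan is to imitate the proof of the previous theorem of this section, taking $\chi = \deg_x$ as the pseudo-degree function on $T$. The three axioms for $\chi$ are immediate: the leading coefficient of a product of two operators is the product of their leading coefficients, which is nonzero because $\mathbb{O}[y]$ is a domain, while subadditivity for sums is clear. The preceding lemma gives $a \in N(T)$, while $\chi(a) = m > 0$ by hypothesis and $\chi(b) \ge 0$ for every nonzero $b \in C_T(a)$; so the only thing left to check before applying Theorem~\ref{thm_BoundDim} is that $C_T(a)$ satisfies condition $D(\ell)$ for some $\ell$, and I claim $\ell = 8$ works.

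To verify $D(8)$, fix a nonzero $b \in C_T(a)$ of $x$-degree $n$ and write $a = \sum_i a_i x^i$ and $b = \sum_j b_j x^j$, with $a_m, b_n$ the respective leading coefficients and each $a_i \in \mathbb{R}[y]$. Here the $\delta$-case genuinely differs from the skew case: since $\sigma = \identity$ and $a_m$ is central in $\mathbb{O}[y]$, equating the coefficients of $x^{m+n}$ in $ab$ and $ba$ yields only the trivial identity $a_m b_n = b_n a_m$. Instead I would equate the coefficients of $x^{m+n-1}$, using the rule $x^i r = \sum_k \binom{i}{k} \delta^{i-k}(r) x^k$. A short computation — legitimate because $a \in N(T)$, $x \in N(T)$, and the real polynomials $a_i$ commute with all of $\mathbb{O}[y]$ — shows that the terms involving $a_{m-1}$ and $b_{n-1}$ cancel, leaving the first-order linear differential equation
\begin{equation*}
m\, a_m\, \delta(b_n) = n\, \delta(a_m)\, b_n .
\end{equation*}

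The key point is that the set of solutions of this equation in $\mathbb{O}[y]$ is an $\mathbb{R}$-subspace of dimension at most $8$. Indeed, the equation is $\mathbb{R}$-linear in the unknown with coefficients $a_m, \delta(a_m) \in \mathbb{R}[y]$, so under the identification $\mathbb{O}[y] \cong \mathbb{O} \otimes_{\mathbb{R}} \mathbb{R}[y]$ its solution set is $\mathbb{O} \otimes_{\mathbb{R}} W$, where $W$ is the space of polynomial solutions over $\mathbb{R}$; and $\dim_{\mathbb{R}} W \le 1$, since a homogeneous first-order linear ODE over the differential field $\mathbb{R}(y)$ has at most a one-dimensional solution space over its field of constants $\mathbb{R}$. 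Hence $\dim_{\mathbb{R}}(\mathbb{O} \otimes_{\mathbb{R}} W) \le 8$. Given any nine elements $b_1, \dots, b_9 \in C_T(a)$ all of $x$-degree $n$, their leading coefficients then lie in this at-most-$8$-dimensional subspace and so are $\mathbb{R}$-linearly dependent, whence some nontrivial $\mathbb{R}$-combination $\sum_i \alpha_i b_i$ has $x$-degree strictly less than $n$; this is exactly condition $D(8)$. Theorem~\ref{thm_BoundDim} now gives that $\mathbb{R}[a]$ is a polynomial ring in one variable over $\mathbb{R}$ and that $C_T(a)$ is a free left $\mathbb{R}[a]$-module of rank at most $8m$, in particular of finite rank.

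The main obstacle I anticipate is the bookkeeping in the $x^{m+n-1}$ computation: one must make sure the non-associativity of $T$ does not interfere with the cancellations, which is exactly where the facts $a \in N(T)$, $x \in N(T)$, $y \in N(T)$ established in the preceding lemmas, together with the centrality of the real coefficients of $a$, are needed, and one must keep the Leibniz terms straight. Once the displayed differential equation has been derived, the remainder is a formal application of the earlier results.
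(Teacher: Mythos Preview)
Your proof is correct and follows essentially the same route as the paper: take $\chi=\deg_x$, use the preceding lemma for $a\in N(T)$, equate the coefficients of $x^{m+n-1}$ in $ab=ba$ (the $a_{m-1},b_{n-1}$ terms cancel because the $a_i\in\mathbb{R}[y]$ are central) to obtain $m\,a_m\,\delta(b_n)=n\,\delta(a_m)\,b_n$, verify condition $D(8)$, and invoke Theorem~\ref{thm_BoundDim}. The only cosmetic difference is in bounding the solution space of that equation by $8$: the paper compares leading $y$-coefficients to show that $\deg_y(b_n)$ is forced, whereas you use that a first-order homogeneous linear ODE over $\mathbb{R}(y)$ has at most a one-dimensional solution space over $\mathbb{R}$ and then tensor with $\mathbb{O}$---both arguments are valid and yield the same conclusion.
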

	
	\begin{proof}
		
	We will apply Theorem \ref{thmComCen} with the degree w.r.t. $x$ as the pseudo-degree function. We know that $a\in N(T)$.

	 We want to show that $C_T(a)$ satisfies condition $D(8)$. Clearly $C_T(a)$ does not contain any nonzero elements of negative degree. 
	
	Let $b$ be a nonzero element of $T$ that commutes with $a$, such that $\chi(b)=n$. Suppose $\chi(a)=m$. Let $a_m$ and $b_n$ denote the highest order coefficients (w.r.t. $x$) of $a$ and $b$, respectively, and similarily for $a_{m-1}$ and $b_{n-1}$. By equating the second highest coefficient of $ab$ and $ba$, and using the fact that the coefficents of $a$ belong to $Z(R)$, we see that
	\begin{equation}\label{eq_SecCoeff}
			 ma_m\delta(b_n)= nb_n\delta(a_m).
	\end{equation}

	Now set $\deg_y(a_m)=\alpha$ and $\deg_y(b_m)=\beta$. Let the leading coefficient in $a_m$ be $r$ and the leading coefficient in $b_n$ be $s$. Then it follows from \eqref{eq_SecCoeff} that 
	\begin{align*}
		mr \beta s &= n s \alpha r \Leftrightarrow \\
		m\beta sr & = n\alpha sr \Leftrightarrow \\
		m \beta &= n\alpha		
	\end{align*}
	(Since $m,n, \alpha, \beta$ and $r$ all belong to $Z(\mathbb{O}$) and $rs\neq 0$ the preceding calculations are easily seen to be justified.)
	Since $m\neq 0$ the equation $m\beta = n\alpha$ determines $\beta$ uniquely. So all solutions of \eqref{eq_SecCoeff} belong to an eight-dimensional $\mathbb{R}$-subspace of $\mathbb{O}[y]$, and condition $D(8)$ is satisfied. 
	\end{proof}

\bibliographystyle{amsplain}

\bibliography{General}

\end{document}